\newcommand{\R}{\mathbb R}  
\renewcommand{\b}{\big}
\newcommand{\Z}{\mathbb Z}  
\renewcommand{\H}{\mathbb H}  
\newcommand{\hf}{\frac{1}{2}}  
\newcommand{\intR}{\int_{-\infty}^{\infty}}
\renewcommand{\l}{\left}
\renewcommand{\r}{\right}
\newcommand{\generic}{\left( \begin{smallmatrix} a & b \\ c & d
 \end{smallmatrix} \right)}
\newcommand{\eps}{\varepsilon}
\renewcommand{\Im}{{\mathrm{Im\,}}}  
\newcommand{\sm}[4]{\l(\begin{smallmatrix} #1 & #2 \\ #3 & #4\end{smallmatrix}\r)}
\newcommand{\mm}{\,{\mathrm{mod}}\,}
\newcommand{\E}{{\mathcal E}}
\newcommand{\Rr}{{\mathcal R}}
\newcommand{\Lan}{\,\big\langle}
\newcommand{\Ran}{\big\rangle\,}
\renewcommand{\b}{\big}
\renewcommand{\include}{\input}
\begin{document}

\theoremstyle{plain}
\newtheorem{theorem}{Theorem}[section]
\newtheorem{proposition}[theorem]{Proposition}
\newtheorem{prop}[theorem]{Proposition}
\newtheorem{lemma}[theorem]{Lemma}
\newtheorem{lem}[theorem]{Lemma}
\newtheorem{fact}[theorem]{Fact}
\newtheorem{corollary}[theorem]{Corollary}
\newtheorem{remark}[theorem]{Remark}
\theoremstyle{definition}
\newtheorem{definition}[theorem]{Definition}
\numberwithin{equation}{section}
\newcommand{\ch}{{\mathrm{char}}}
\newcommand{\Wig}{{\mathrm{Wig}}}
\newcommand{\sqf}{\,{\mathrm{squarefree}}\,}
\newcommand{\Sqf}{{\mathrm{Sqf}}}
\newcommand{\Qs}{Q^{\hf}}
\newcommand{\Qms}{Q^{-\hf}}

\newcommand{\Zert}{\b\bracevert}

\newcommand{\Fymp}{{\mathcal F}^{\mathrm{symp}}}
\newcommand{\Feuc}{{\mathcal F}^{\mathrm{euc}}}
\newcommand{\Conj}{{\mathcal C}^{\mathrm{conj}}}
\newcommand{\K}{{\mathfrak K}}

\newcommand{\slut}{\end{document}}

\title[A unified scheme of approach to Ramanujan conjectures]{A unified scheme of approach to Ramanujan conjectures}

\author{Andr\'e Unterberger, University of Reims, CNRS UMR9008}

Math\'ematiques, Universit\'e de Reims, BP 1039, F51687 Reims Cedex, France, andre.unterberger@univ-reims.fr\\

\maketitle

{\sc Abstract.} The Ramanujan conjecture for modular forms of holomorphic type was proved by Deligne \cite{del} almost half a century ago: the proof, based on his proof of Weil's conjectures, was an achievement of algebraic geometry. We give here a short proof of the Ramanujan-Deligne theorem, and we shall indicate at the end the identity of the proof with that of the Ramanujan-Petersson conjecture for Maass forms \cite{untRam}, save for the necessity of some spectral-theoretic developments in the latter case.\\

\section{Introduction}\label{sec1}

This is an analyst's proof of the classical Ramanujan conjecture for modular forms
of the group $\Gamma=SL(2,\Z)$ of holomorphic type. Deligne's proof, valid for congruence groups, was obtained as a consequence of his proof of the Weil conjectures. We only consider the Ramanujan conjecture here. The short proof to follow is a belated answer to a question raised by Chandrasekharan \cite[p.140]{cha} or implicitly raised by Manin \cite[p.99]{man}.\\

We limit ourselves, here, to modular forms of even weight $m+1=2,4,\dots$ for the full unimodular group. These are holomorphic functions $f$ in the hyperbolic half-plane $\H$ satisfying the identity $f(z)=(bz +d)^{-m-1}f\l(\frac{az+c}{bz+d}\r)$ for $\generic \in \Gamma=SL(2,\Z)$. Among these, cusp-forms are distinguished by the fact that they admit Fourier expansions of the kind $f(z)=\sum_{n\geq 1} b_n\,e^{2i\pi nz}$: then, $m+1\geq 12$. Finally, Hecke eigenforms are cusp-forms which are joint eigenfunctions of the Hecke operators $T_p$ ($p$ prime), the definition of which will be recalled. The Ramanujan-Deligne theorem is the inequality $\l|\frac{b_p}{b_1}\r|\leq 2\,p^{\frac{m}{2}}$ for $p$ prime. The function $f$ is normalized in Hecke's way if $b_1=1$, so that one has simply $T_pf=b_pf$ \cite[p.101]{iwatop}.\\

The main point of the proof consists in realizing automorphic functions as distributions in the plane, as explained in the next section: they are then regarded as the integral kernels of operators of interest. As will be very briefly indicated in a last section, the same method works in the non-holomorphic case \cite{untRam}, but it is not the same plane in the two cases: it is the one provided with its Euclidean structure in the present case, the one provided with the symplectic structure in the non-holomorphic case. It is not the same operator-calculus either: the Weyl symbolic (pseudodifferential) calculus has to be used in the non-holomorphic case.\\


\section{A representation of $SL(2,\R)$}\label{sec2}

In this section, we define the representation $\Omega$ and the operators $\Theta_m$ which isolate the individual terms of its decomposition into irreducibles. Also, we introduce for $M=1,2,\dots$ an object ${\mathfrak T}_M$, not quite an element of ${\mathcal S}'(\R^2)$ but just as good for our purposes (\ref{219}), the image of which under $\Theta_m$ is the Poincar\'e series
\begin{equation}\label{21}
\l(\Theta_m{\mathfrak T}_M\r)(z)=-i\,(2M)^{\frac{m}{2}}\sum_{(a,c)=1}
(-dz+b)^{-m-1}\exp\l(2i\pi M\frac{-cz+a}{-dz+b}\r).
\end{equation}
The integer $M$ is kept fixed until the very last section, and no uniformity with respect to $M$ is needed, or claimed, in all estimates.\\

We denote as $\Omega$ the representation of $SL(2,\R)$ in any of the spaces ${\mathcal S}(\R^2),\,L^2(\R^2),\,{\mathcal S}'(\R^2)$, unitary in the second case, defined on generators by the equations, in which the Euclidean Fourier transform is defined as $(\Feuc h)(x)=\int_{\R^2}h(y)\,e^{-2i\pi(x,y)}dx$,
\begin{align}\label{22}
{\mathrm (i)\ } \,&\Omega\l(\l( \begin{smallmatrix}
1 & 0 \\ c & 1\end{smallmatrix}\r)\r)\,h)(x)=h(x)\,e^{i\pi c |x|^2}, \quad x\in \R^2\,;\nonumber\\
{\mathrm (ii)\ } \,&\Omega\l(\l(\begin{smallmatrix}0 & 1 \\ -1 &
0\end{smallmatrix}\r)\r)\,h=-i\,{\Feuc}h\,;\nonumber\\
{\mathrm (iii)\ } \,&(\Omega\l(\l( \begin{smallmatrix}
a & 0 \\ 0 & a^{-1}\end{smallmatrix}\r)\r)\,h)(x)=a^{-1}\,h(a^{-1}x),
\quad x\in \R^2,\ a\neq 0\,.
\end{align}\\

We first show that the representation $\Omega$ contains all representations ${\mathcal D}_{m+1}$ from the holomorphic discrete series of representations of $SL(2,\R)$.\\

\begin{proposition}\label{prop21}
Given $m=1,2,\dots$ and $h\in {\mathcal S}(\R^2)$, set for $z$ in the hyperbolic half-plane $\H=\{z\colon \Im z>0\}$
\begin{equation}\label{23}
(\Theta_m\,h)(z)=\int_{\R^2} (x_1+ i\,x_2)^m \, e^{i\pi z\,|x|^2}\,
h(x)\,dx.
\end{equation}
For every $g=\generic \in SL(2,\R)$, one has $\Theta_m\l(\Omega(g)\,h\r)={\mathcal D}_{m+1}(g)\,\Theta_mh$, with
\begin{equation}\label{24}
({\mathcal D}_{m+1}\l(\generic\r)\,f)(z)=(bz+d)^{-m-1}f\l(\frac{az+c}{bz+d}\r)\,.
\end{equation}\\
\end{proposition}

\begin{proof}
One may assume that $g$ is one of the generators of $SL(2,\R)$ listed in (\ref{22}): the only non-immediate case is (ii). Writing
\begin{align}
\l(\Theta_m\l(-i\,\Feuc h\r)\r)(z)&=
-i\int_{\R^2}(x_1+ix_2)^me^{i\pi z\,|x|^2}\l(\Feuc h\r)(x)\,dx\nonumber\\
&=-i\Lan \Feuc\l((x_1+ix_2)^me^{i\pi z\,|x|^2}\r),\,h\Ran\,,
\end{align}
using the classical formula for the Fourier transform of products of radial functions by ``spherical harmonics'' (e.g.\,\cite{ste})
\begin{equation}\label{25}
\Feuc\l((x_1+ix_2)^mf(|x|)\r)=(x_1+ix_2)^m\,\times\,2\pi\,i^{-m}|x|^{-m}\int_0^{\infty}f(t)\,t^{m+1}
J_m(2\pi t\,|x|)\,dt
\end{equation}
and the equation \cite[p.93]{mos}
\begin{equation}
\int_0^{\infty} e^{i\pi zt^2}t^{m+1}J_m(2\pi\,|x|\,t)\,dt=
\frac{1}{2\pi}\,|x|^m(-iz)^{-m-1}\exp\l(-\frac{i\pi\,|x|^2}{z}\r).
\end{equation}
one obtains
\begin{equation}
\l(\Theta_m\l(-i\,\Feuc h\r)\r)(z)=z^{-m-1}\l(\Theta_mh\r)\l(-\frac{1}{z}\r),
\end{equation}
which is the desired case of (\ref{24}).\\
\end{proof}

Analysis in the plane $\R^2$ will be based on the use of the commuting operators (where
$\partial_j=\frac{\partial}{\partial x_j}$)
\begin{equation}
2i\pi\Rr=x_2\,\partial_1-x_1\,\partial_2,\qquad 2i\pi\E=1+x_1\,\partial_1 +x_2\,\partial_2\,,
\end{equation}
the first of which commutes with all transformations $\Omega(g)$. The second one will be used mostly in the form $(t^{2i\pi\E}\,h)(x_1,x_2)=t\,h(tx_1,tx_2)$ for $t>0$.\\

The following elementary, if admittedly tedious, calculation introduces the distribution $\psi_M$ the transforms of which under the operators $\Omega (g)$ with $g\in \Gamma$ constitute the individual terms of the Poincar\'e-type series ${\mathfrak T}_M$ alluded to in the introduction, to be defined in (\ref{219}) below.\\

\begin{lemma}
With $M=1,2,\dots$, let $\phi_M(x_1,x_2)=e^{2i\pi x_1\sqrt{2M}}$ and $\psi_M=\Omega\l(\sm{0}{1}{-1}{0}\r)\phi_M$. One has
\begin{equation}\label{29}
\psi_M(x)=-i\,\delta(x_1-\sqrt{2M})\delta(x_2).
\end{equation}
If $\sm{a}{\centerdot}{c}{\centerdot} \in SL(2,\Z)$ and $ac\neq 0$, one has
\begin{align}\label{211}
\bigg(\Omega\l(\sm{\centerdot}{-a}{\centerdot}{-c}\r)\,\psi_M\bigg)(x)
&=a^{-1}\,\exp\l(\frac{2i\pi M\overline{c}}{a}\r)\nonumber\\
&\times\,\exp\l(\frac{2i\pi x_1\sqrt{2M}}{a}\r)\,\exp\l(\frac{i\pi c\,|x|^2}{a}\r)\,,
\end{align}
where $\overline{c}$ is defined by the congruence $c\,\overline{c}\equiv 1\mm a$. Also, with $\eps=\pm 1$,
\begin{equation}\label{212}
\bigg( \Omega\l(\sm{\centerdot}{0}{\centerdot}{\eps}\r)\,\psi_M\bigg)(x)=
\eps\,\psi_M(\eps x),\qquad \l(\Omega\l(\sm{\centerdot}{-1}{1}{0}\r)h\r)(x)=-i\,(\Feuc h)(-x).
\end{equation}
while $\Omega\l(\sm{\centerdot}{1}{-1}{0}\r)$ is given in ({\em (\ref{23})\/}).\\
\end{lemma}

\begin{proof}
To obtain (\ref{29}), one writes
\begin{equation}
\psi_M(x)=-i\l(\Feuc \phi_M\r)(x)=-i\,\delta(x_1-\sqrt{2M})\delta(x_2).
\end{equation}
Some preparation will help the proof of (\ref{211}). One notes that
\begin{multline}
\Omega\l(\sm{1}{0}{\gamma}{1}\r)\psi_M=e^{2i\pi M\gamma}\psi_M,\\
\l(\Omega\l(\sm{1}{\beta}{0}{1}\r)\psi_M\r)(x)=-\frac{1}{\beta}\,\exp\l(\frac{2i\pi M}{\beta}\r) \exp\l(-2i\pi\frac{x_1\sqrt{2M}}{\beta}\r) \exp\l(\frac{i\pi\,|x|^2}{\beta}\r).
\end{multline}
The first equation is a consequence of (\ref{22}), and the second is obtained from the decomposition
\begin{equation}
\sm{1}{\beta}{0}{1}=\sm{0}{1}{-1}{0}\sm{1}{0}{-\beta}{1} \sm{0}{-1}{1}{0}
\end{equation}
from which, applying (\ref{22}) again,
\begin{align}
\l(\Omega\l(\sm{1}{\beta}{0}{1}\r) \psi_M\r)(x)&=
-i\,\Feuc\big[e^{-i\pi \beta\,|x|^2}e^{2i\pi x_1\sqrt{2M}}\big]\nonumber\\
&=-i\,\Feuc\l(y\mapsto e^{-i\pi \beta\,|y|^2}\r)(x_1-\sqrt{2M},\,x_2).
\end{align}\\

Then, using if $ac\neq 0$
\begin{equation}
\sm{b}{-a}{d}{-c}=\sm{-c^{-1}}{0}{0}{-c} \sm{1}{ac}{0}{1} \sm{1}{0}{-\frac{d}{c}}{1},
\end{equation}
one has
\begin{equation}\label{218}
\Omega\l(\sm{b}{-a}{d}{-c}\r)\psi_M=\exp\l(-\frac{2i\pi Md}{c}\r)\,\Omega\l(\sm{-c^{-1}}{0}{0}{-c} \sm{1}{ac}{0}{1}\r) \psi_M.
\end{equation}
The equations which precede give (\ref{212}).\\

The special cases when $ac=0$ are easily dealt with.\\
\end{proof}

Setting $N=\{\sm{1}{n}{0}{1}\colon n\in \Z\}$ and
$N^{\bullet}=\{\sm{1}{0}{n}{1}\colon n\in \Z\}$, one can define formally the series
\begin{multline}\label{219}
{\mathfrak T}_M=\sum_{g\in \Gamma/N}\Omega(g)\,\phi_M=
\sum_{g\in \Gamma/N^{\bullet}} \Omega(g)\,\psi_M\\
=\sum_{(a,c)=1} \Omega\l(\sm{\centerdot}{-a}{\centerdot}{-c}\r) \psi_M=\sum_{(a,c)=1}I_{a,c},
\end{multline}
with
\begin{equation}\label{220}
I_{a,c}(x)=a^{-1}\exp\l(\frac{2i\pi M\overline{c}}{a}\r)\,
\exp\l(\frac{2i\pi x_1\sqrt{2M}}{a}\r)\,\exp\l(\frac{i\pi c\,|x|^2}{a}\r).
\end{equation}
However, the series does not converge weakly in ${\mathcal S}'(\R^2)$: to recover this, it will be necessary to consider in place of this would-be distribution its image under the operator $(2i\pi\Rr)^4$. Also, the core of this proof of the Ramanujan-Deligne theorem will consist in $q$-dependent estimates (the cases when $q\to \infty$ and $q\to 0$ must both be considered) for the series
\begin{equation}\label{221}
\sum_{(a,c)=1}\Lan I_{a,c},\,q^{1-2i\pi\E}h\Ran
=\sum_{(a,c)=1}\Lan q^{1+2i\pi\E}\,I_{a,c},\,h\Ran.
\end{equation}
and its images under Hecke operators. Recall that $(q^{1-2i\pi\E}h)(x_1,x_2)=h\l(\frac{x_1}{q},\frac{x_2}{q}\r)$.\\


As the coefficient $\exp\l(\frac{2i\pi M\overline{c}}{a}\r)$ is unitary and would not change estimates, we shall consider instead the function
\begin{equation}\label{222}
\overset{\circ}{I}_{a,c}(x)=a^{-1}\,
\exp\l(\frac{2i\pi x_1\sqrt{2M}}{a}\r)\,\exp\l(\frac{i\pi c\,|x|^2}{a}\r).
\end{equation}\\

\section{Integral kernels and operators}

Given ${\mathfrak S}\in {\mathcal S}'(\R^2)$, we denote as $\K({\mathfrak S})$ the operator with integral kernel ${\mathfrak S}$, i.e., the one defined by the equivalent equations
\begin{align}
(\K({\mathfrak S})\,u)(x_1)&=\intR {\mathfrak S}(x_1,\,x_2)\,u(x_2)\,dx_2,\nonumber\\
<v,\,\,\K({\mathfrak S})\,u>&=\int_{\R^2} v(x_1)\,\,{\mathfrak S}(x_1,x_2)\,\,u(x_2)\,dx_1\,dx_2=<{\mathfrak S},\,v\otimes u>.
\end{align}
Note that we use here the bilinear version $<v,\,u>=\intR v(t)\,u(t)\,dt$, not the hermitian scalar product the consideration of which is appropriate in the non-holomorphic case.\\

Letting $P=\frac{1}{2i\pi}\,\frac{d}{dt}$, and $Q$ denoting the operator that multiplies functions of $t$ by $t$, one has the immediate equation
\begin{equation}\label{32}
2i\pi\,(P\,\K({\mathfrak S})\,Q+Q\,\K({\mathfrak S})\,P)=\K\l((x_2\partial_1-x_1\partial_2)\,{\mathfrak S}\r),
\end{equation}
the proof of which necessitates nothing more than an integration by parts in the second term. An equivalent equation is
\begin{equation}\label{33}
-<{\mathfrak S},\,v'\,\otimes\,(x_2 u)>+<{\mathfrak S},\,(x_1v)\,\otimes\,u'>=
<(2i\pi\Rr){\mathfrak S},\,\,v\,\otimes\,u>:
\end{equation}
the change of sign in the two terms originates from our having transposed the operators $\partial_1$
and $\partial_2$.\\

\begin{theorem}\label{theo31}
The series $\sum_{(a,c)=1} I_{a,c}$ is weakly convergent in the space $\Rr^4{\mathcal S}'(\R^2)$ of continuous linear forms on $\Rr^4{\mathcal S}(\R^2)$.
\end{theorem}

From (\ref{222}), one obtains, if $v,u\in {\mathcal S}(\R)$ and $ac\neq 0$, the equation
\begin{equation}\label{34}
<\overset{\circ}{I}_{a,c}\,v\otimes\,u>=a^{-1}\,A_{a,c}(v)\,B_{a,c}(u),
\end{equation}
with
\begin{align}
A_{a,c}(v)&=\intR \exp\l(\frac{i\pi c\, x_1^2}{a}\r)\,\exp\l(\frac{2i\pi\,x_1\sqrt{2M}}{a}\r)\,v(x_1)\,dx_1,\nonumber\\
B_{a,c}(u)&=\intR \exp\l(\frac{i\pi c\, x_2^2}{a}\r)\,u(x_2)\,dx_2.
\end{align}
As a function of $a,c$, the product $A_{a,c}(v)\,B_{a,c}(u)$ is bounded if $v$ and $u$ are summable. One can replace the condition $ac\neq 0$ by $a\neq 0$.\\

Integrations by parts yield
\begin{align}\label{36}
A_{a,c}(Pv)=-&\intR \exp\l(\frac{i\pi c\, x_1^2}{a}\r)\,\exp\l(\frac{2i\pi\,x_1\sqrt{2M}}{a}\r)\nonumber\\
&\l[\frac{cx_1}{a}+\frac{\sqrt{2M}}{a}\r]\, v(x_1)\,dx_1,\nonumber\\
B_{a,c}(Pu)=-&\intR \exp\l(\frac{i\pi c\, x_2^2}{a}\r)\,\frac{cx_2}{a}\,u(x_2)\,dx_2.
\end{align}
while the computation of $A_{a,c}(x_1v)$ or that of $B_{a,c}(x_2u)$ is immediate. If one applies these transformations inside (\ref{32}) or (\ref{33}), one observes that the terms containing $x_1x_2$ as a factor cancel off, so that
\begin{equation}\label{37}
<\Rr\,\overset{\circ}{I}_{a,c},\,v\,\otimes\,u>=\frac{c\sqrt{2M}}{a^2}\,
<\overset{\circ}{I}_{a,c})\,v\,\otimes\,(x_2u)>.
\end{equation}
Iterating the operation twice, one gets
\begin{equation}\label{38}
<\Rr^4\,\overset{\circ}{I}_{a,c},\,v\,\otimes\,u>=\l(\frac{c\sqrt{2M}}{a^2}\r)^4\,
<\overset{\circ}{I}_{a,c},\,v\,\otimes\,(x_2^4\,u)>.
\end{equation}
Not forgetting the extra coefficient $a^{-1}$ in front of the right-hand side of (\ref{34}), one sees that the series $\sum I_{a,c}$, when limited to the terms such that $|a|\geq |c|$, is weakly convergent in ${\mathcal S}'(\R^2)$. One may note that using $\Rr^2$ in place of $\Rr^4$ would just fail, while
using $\Rr^3$ would do.\\

Next, it is immediate that one has if ${\mathfrak S}\in{\mathcal S}'(\R^2)$ and $v,u\in {\mathcal S}(\R)$ the identity
\begin{equation}
<{\mathfrak S},\,\widehat{v}\,\otimes\,\widehat{u}>=<\Feuc {\mathfrak S},\,v\,\otimes\,u>
=<-i\,\Omega\l(\sm{0}{1}{-1}{0}\r)\,{\mathfrak S},\,v\,\otimes\,u>.
\end{equation}
On the other hand, from (\ref{219}),
\begin{equation}
\Omega\l(\sm{0}{1}{-1}{0}\r)\,I_{a,c}=I_{-c,a},
\end{equation}
so that
\begin{equation}\label{311}
<I_{-c,a},\,v\otimes\,u>=i\,<I_{a,c},\,\widehat{v}\,\otimes\,\widehat{u}>.
\end{equation}
Trading the pair $v,u$ for the pair $\widehat{v},\widehat{u}$, one obtains the convergence of the series $\sum I_{a,c}$ limited to the terms such that $|c|\geq|a|$. The proof of Theorem \ref{theo31} is complete.\\

Let us display explicitly the identities obtained:
\begin{align}\label{312}
<\overset{\circ}{I}_{a,c},\,v\,\otimes\,u>&=a^{-1}\,A_{a,c}(v)\,B_{a,c}(u),\nonumber\\
<\overset{\circ}{I}_{a,c},\,v\otimes\,u>&=i\,e^{\frac{2i\pi M}{ac}}\,c^{-1}\,A_{-c,a}({\mathcal F}^{-1}v)\,B_{-c,a}({\mathcal F}^{-1}u).
\end{align}
The harmless coefficient $e^{\frac{2i\pi M}{ac}}$ is the ratio of the two factors neglected when replacing $I_{a,c}$ or $I_{-c,a}$ by $\overset{\circ}{I}_{a,c}$ or $\overset{\circ}{I}_{-c,a}$. In particular,\begin{equation}\label{313}
\l|<\overset{\circ}{I}_{a,c},\,v\,\otimes\,u>\r|\leq \begin{cases}
|a|^{-1}\,\Vert v \Vert_{\overset{}{L^1}}\,\Vert u \Vert_{\overset{}{L^1}},\qquad &a\neq 0,\\
|c|^{-1}\,\Vert {\mathcal F}^{-1}v \Vert_{\overset{}{L^1}}\,\Vert {\mathcal F}^{-1}u \Vert_{\overset{}{L^1}},\qquad &c\neq 0.
\end{cases}
\end{equation}\\

Using the fact that $\Vert w_q \Vert_{\overset{}{L^1}}=q^{-1}\,\Vert w \Vert_{\overset{}{L^1}}$ for every $w\in {\mathcal S}'(\R)$, the case when $|a|\geq |c|$ and $q>1$ is immediate.

\section{The Hecke operator $T_p$ and its powers}\label{sec4}

The calculations in the present section are an exact replica of those made in \cite[Section 5]{untRam} in the Maass case. The algebra is the same. This coincidence does not extend to the analytic developments making up the rest of either paper.\\

N.B. The notation in this section is incompatible with that of \cite{untRam}: the operators denoted as $T_p^{\mathrm{dist}},\, \tau[\beta],\,\sigma_r$ here and there, the roles of which in the two domains are totally similar, are not identical.\\

The Poincar\'e series $\l(\Theta_m{\mathfrak T}_M\r)(z)$ (\ref{21}) is a modular form of weight $m+1$, actually a cusp-form. On the class of holomorphic functions $f$ invariant under the change $z\mapsto z+1$, one defines for $p$ prime \cite[(6.13)]{iwatop} the operator $T_p$ such that
\begin{equation}\label{41}
(T_pf)(z)=p^m\,f(pz)+\frac{1}{p}\,\sum_{s\mm p} f\l(\frac{z+s}{p}\r)\,.
\end{equation}
It preserves the space of cusp-forms of given weight $m+1$. Cusp-forms of Hecke type are those which are moreover joint eigenfunctions of the collection of Hecke operators. In other words, a cusp-form with the Fourier expansion $f(z)=\sum_{n\geq 1}b_n\,e^{2i\pi nz}$ is of Hecke type if, for any prime $p$, $T_pf$ is a multiple of $f$, of necessity by the factor $\frac{b_p}{b_1}$  \cite[(6.39)]{iwatop}.\\

\begin{lemma}\label{lem41}
Define on distributions ${\mathfrak S}$ invariant under $\Omega\l(N^{\bullet}\r)$ the operator $T_p^{\mathrm{plane}}$ such that
\begin{equation}
\l(T_p^{\mathrm{plane}}{\mathfrak S}\r)(x)=p^{\frac{m}{2}-1}{\mathfrak T}\l(\frac{x}{\sqrt{p}}\r)+p^{\frac{m}{2}}\sum_{s\mm p}{\mathfrak S}(x\sqrt{p})\,
\exp\l(i\pi\,\beta\,|x|^2\r)\,,
\end{equation}
in other words, denoting as $\tau[\beta]$ the operator $\Omega\l(\sm{1}{0}{\beta}{1}\r)$ of multiplication by $e^{i\pi\beta\,|x|2}$,
\begin{equation}
p^{-\frac{m}{2}}T_p^{\mathrm{plane}}=p^{-\hf-i\pi\E}+\sum_{s\mm p} \tau[s]\, p^{-\hf+i\pi\E}\,.
\end{equation}
If ${\mathfrak S}\in {\mathcal S}'(\R^2)$ is invariant under $\Omega\l(N^{\bullet}\r)$, one has for $m=3,5,\dots$ the identity
\begin{equation}\label{44}
T_p\,\l(\Theta_m\,{\mathfrak S}\r)=
\Theta_m\l(T_p^{\mathrm{plane}}{\mathfrak S}\r)\,.
\end{equation}\\
\end{lemma}

\begin{proof}
One has
\begin{multline}\label{45}
\l(T_p\Theta_m{\mathfrak S}\r)(z)\\
=\int_{\R^2}(x_1+ix_2)^m{\mathfrak S}(x)\,\l[p^m\,e^{i\pi pz\,|x|^2}+\frac{1}{p}\sum_{s\mm p}
 \exp\l(\frac{i\pi (z+s)\,|x|^2}{p}\r)\r] dx\\
 =\int_{\R^2} (x_1+ix_2)^m\,e^{i\pi z\,|x|^2}\l[p^{\frac{m}{2}-1}{\mathfrak T}\l(\frac{x}{\sqrt{p}}\r)+p^{\frac{m}{2}}\,{\mathfrak S}(x\sqrt{p})\,e^{i\pi s|x|^2}\r] dx.
\end{multline}\\
\end{proof}

The following lemma prepares for the proof of Theorem \ref{theo43}.\\

\begin{lemma}\label{lem42}
Denote as $H$ the operator $p^{-\hf-i\pi\E}$.
Recall that, for $\beta\in \R$, the operator $\tau[\beta]$ is the operator of multiplication by the function $\exp\l(i\pi\beta\,|x|^2\r)$. If one introduces for every $j\in \Z$ the space ${\mathrm{Inv}}(p^j)$ consisting of tempered distributions invariant under $\tau[p^j]$, the operator $H^{\ell}$ acts for every $\ell\in \Z$ from ${\mathrm{Inv}}(p^j)$ to ${\mathrm{Inv}}(p^{j-\ell})$. Given two linear endomorphisms $A_1$ and $A_2$ of ${\mathrm{Inv}}(1)$, write $A_1\sim A_2$ if the two operators (which may well extend to a larger space) coincide there. On the other hand, introduce the operators
\begin{equation}\label{46}
\sigma_r=\frac{1}{p^r}\sum_{s\mm p^r} \tau\l[\frac{s}{p^r}\r],\qquad \sigma_r^{(\ell)}=\frac{1}{p^r}\sum_{s\mm p^r} \tau\l[sp^{\ell-r}\r],
\end{equation}
the first (the case $\ell=0$ of $\sigma_r^{(\ell)}$) on ${\mathrm{Inv}}(1)$, the second on ${\mathrm{Inv}}\l(p^{\ell}\r)$. One has
\begin{equation}
H=p^{-\hf-i\pi\E},\quad H^{-1}=p^{\hf+i\pi\E},\qquad
{\mathrm{so\,\,that\,\,}}\qquad
p^{-\frac{m}{2}}T_p^{\mathrm{plane}}=H+\sigma_1^{(1)}\,H^{-1}.
\end{equation}
One has for every pair $r,\ell$ of non-negative integers
\begin{equation}
H^{\ell}\sigma_r\sim \sigma_{r+\ell}H^{\ell},\qquad H^{-\ell}\sigma_r\sim \sigma_r^{(\ell)}H^{-\ell},\qquad \sigma_r H^{-1}\sigma_1\sim H^{-1}\sigma_{r+1}.
\end{equation}\\
\end{lemma}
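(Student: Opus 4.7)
The plan rests on one computation: from the explicit action $(Rh)(x) = p^{-1}h(x/\sqrt p)$ and the definition of $\tau[\beta]$ one reads off the commutation relation $R\tau[\beta] = \tau[\beta/p]R$, hence $R^\ell\tau[\beta] = \tau[\beta/p^\ell]R^\ell$ for every $\ell\in\Z$. This already yields the mapping property $R^\ell:\mathrm{Inv}(p^j)\to\mathrm{Inv}(p^{j-\ell})$, since $\tau[p^j]\mathfrak{T}=\mathfrak{T}$ forces $\tau[p^{j-\ell}](R^\ell\mathfrak{T}) = R^\ell(\tau[p^j]\mathfrak{T}) = R^\ell\mathfrak{T}$. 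It also yields identity $R^{-\ell}\sigma_r\sim\sigma_r^{(\ell)}R^{-\ell}$ term by term in the averaging sum:
$$R^{-\ell}\sigma_r = \frac{1}{p^r}\sum_{s\mm p^r}R^{-\ell}\tau[s/p^r] = \frac{1}{p^r}\sum_{s\mm p^r}\tau[sp^{\ell-r}]R^{-\ell} = \sigma_r^{(\ell)}R^{-\ell},$$
with no invariance hypothesis required.

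The identity $R^\ell\sigma_r\sim\sigma_{r+\ell}R^\ell$ is the only genuinely interesting step. The same commutation gives $R^\ell\sigma_r\mathfrak{T} = \frac{1}{p^r}\sum_{s\mm p^r}\tau[s/p^{r+\ell}](R^\ell\mathfrak{T})$, whereas $\sigma_{r+\ell}R^\ell\mathfrak{T}$ has the same summand $\tau[\cdot/p^{r+\ell}]$ but runs over the larger set $t\mm p^{r+\ell}$ with normalization $p^{-(r+\ell)}$. I would reindex this larger sum by $t = s + vp^r$ with $s\mm p^r$ and $v\mm p^\ell$, so that $\tau[t/p^{r+\ell}] = \tau[s/p^{r+\ell}]\,\tau[p^{-\ell}]^v$. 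By the mapping property just established, $R^\ell\mathfrak{T}\in\mathrm{Inv}(p^{-\ell})$, so $\tau[p^{-\ell}]^v$ acts as the identity, and the inner sum over $v$ produces a factor $p^\ell$ that exactly cancels the normalization mismatch.

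For $\sigma_r R^{-1}\sigma_1\sim R^{-1}\sigma_{r+1}$, on $\mathfrak{T}\in\mathrm{Inv}(1)$ I would expand the left side using the commutation,
$$\sigma_r R^{-1}\sigma_1\mathfrak{T} = \frac{1}{p^{r+1}}\sum_{s'\mm p^r}\sum_{s\mm p}\tau[s'/p^r + s]\,R^{-1}\mathfrak{T},$$
and observe that $(s',s)\mapsto t := s'+sp^r$ is a bijection $\Z/p^r\Z\times\Z/p\Z\to\Z/p^{r+1}\Z$, rewriting the double sum as $\frac{1}{p^{r+1}}\sum_{t\mm p^{r+1}}\tau[t/p^r]R^{-1}\mathfrak{T}$. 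Applying the commutation directly to $R^{-1}\sigma_{r+1}\mathfrak{T}$ yields the identical expression, so the third identity follows at once.

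The sole conceptual obstacle is the collapse argument underlying the first identity: the $p^\ell$-fold redundancy in the $\sigma_{r+\ell}$-sum, when evaluated on a distribution in $\mathrm{Inv}(p^{-\ell})$, is precisely what absorbs the normalization gap. This is why the $\sim$-relation (equality on $\mathrm{Inv}(1)$), rather than strict operator equality, is essential for the first identity and unnecessary for the other two.
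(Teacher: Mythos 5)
Your proposal is correct and follows essentially the same route as the paper: the commutation relation $R\,\tau[\beta]=\tau[\beta/p]\,R$, the resulting mapping property of $R^{\ell}$ on the spaces ${\mathrm{Inv}}(p^j)$, and the collapse of the $p^{\ell}$-fold redundant sum (resp.\ the reindexing $t=s'+sp^r$ modulo $p^{r+1}$) under the relevant invariance. The only cosmetic differences are that the paper proves the first identity for $\ell=1$ and then inducts, and verifies the third by writing out the distributions explicitly rather than by manipulating the $\tau$-operators.
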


\begin{proof}
A distribution ${\mathfrak S}$ lies in ${\mathrm{Inv}}\l(p^{\ell}\r)$ if it is invariant under the multiplication by $\exp\l(i\pi p^{\ell}|x|^2\r)$: in particular, $\psi_M\in {\mathrm{Inv}}(1)$ for $M=1,2,\dots$.\\

That $H^{\ell}$ sends ${\mathrm{Inv}}(p^j)$ to ${\mathrm{Inv}}(p^{j-\ell})$ is immediate. So is the fact that $H\tau[\beta]=\tau\l[\frac{\beta}{p}\r]H$ does. Write then
\begin{equation}\label{49}
H\,\sigma_r=\frac{1}{p^r}\sum_{s\mm p^r} \tau\l[\frac{s}{p^{r+1}}\r]H,\qquad
\sigma_{r+1}H=\frac{1}{p^{r+1}}\sum_{s_1\mm p^{r+1}}\tau\l[\frac{s_1}{p^{r+1}}\r]H.
\end{equation}
The first operator makes sense on ${\mathrm{Inv}}(1)$ because if ${\mathfrak S}$ lies in this space, $H{\mathfrak S}\in {\mathrm{Inv}}\l(\frac{1}{p}\r)$, and the knowledge of $b$ mod $p^r$ implies that of $\frac{b}{p^{r+1}}$ up to a multiple of $\frac{1}{p}$. That $H\,\sigma_r$ and $\sigma_{r+1}H$ agree on ${\mathrm{Inv}}(1)$ follows. By induction on $\ell$, $H^{\ell}\sigma_r\sim \sigma_{r+\ell}H^{\ell}$. Next,
\begin{align}
\l(H^{-\ell}\sigma_r{\mathfrak S}\r)(x)&=H^{-\ell}\l[x\mapsto p^{-r}\sum_{s\mm p} {\mathfrak S}(x)\,\exp\l(\frac{i\pi\,s\,|x|^2}{p^r}\r)\r]\nonumber\\
&=p^{\ell-r}\sum_{s\mm p^r}{\mathfrak S}(p^{\frac{\ell}{2}}x)\,\exp\l(i\pi s\,p^{\ell-r}|x|^2\r)\nonumber\\
&=p^{-r}\sum_{s\mm p^r} \exp\l(i\pi s\,p^{\ell-r}|x|^2\r)\l(p^{\ell(\hf+i\pi\E)}{\mathfrak S}\r)(x),
\end{align}
so that $H^{-\ell}\sigma_r\sim \sigma_r^{(\ell)}H^{-\ell}$.\\

Finally, if ${\mathfrak S}\in {\mathrm{Inv}}(1)$, $\sigma_1{\mathfrak S}\in {\mathrm{Inv}}(p^{-1})$, so that $H^{-1}\sigma_1{\mathfrak S}\in {\mathrm{Inv}}(1)$, and the operator $\sigma_r\,H^{-1}\sigma_1$ is well-defined on this space. One has
\begin{multline}
(\sigma_r\,H^{-1}\sigma_1{\mathfrak S})(x_1,x_2)=\sigma_r\l[(x_1,x_2)\mapsto p\,(\sigma_1{\mathfrak S})\l(p^{\hf}x\r)\r]\\
=p^{-r+1}\sum_{s\mm p^r}(\sigma_1{\mathfrak S})\l(p^{\hf}x\r)
\exp\l(\frac{i\pi s\,|x|^2}{p^r}\r)
=p\,{\mathfrak T}(p^{\hf}x)=\l(H^{-1}{\mathfrak T}\r)(x)
\end{multline}
with
\begin{align}\label{411}
{\mathfrak T}(x)&=p^{-r}\sum_{s\mm p^r} (\sigma_1{\mathfrak S})(x) \exp\l(\frac{i\pi s\,|x|^2}{p^{r+1}}\r)\nonumber\\
&=p^{-r-1}\sum_{\begin{array}{c} s\mm p^r \\ s'\mm p\end{array}} {\mathfrak S}(x)\,\exp\l(\frac{i\pi s\,|x|^2}{p^{r+1}}+\frac{i\pi s'\,|x|^2}{p}\r)\,.
\end{align}
As $s$ and $s'$ run through the classes indicated as a subscript, $s+p^rs'$ describes a full class modulo $p^{r+1}$, so that the right-hand side of (\ref{411}) is the same as $(H^{-1}\sigma_{r+1}{\mathfrak S})(x_1,x_2)$. In other words,
$\sigma_r H^{-1}\sigma_1\sim H^{-1}\sigma_{r+1}$.\\
\end{proof}

\begin{theorem}\label{theo43}
Fixing a prime $p$, set $H=p^{-\hf-i\pi\E}$ and
\begin{equation}\label{413}
\widetilde{T}=p^{-\frac{m}{2}}T_p^{\mathrm{plane}}=H+\sigma_1^{(1)}H^{-1}
=H+H^{-1}\sigma_1.
\end{equation}
Given $k=1,2,\dots$ and $\ell$ such that $0\leq \ell\leq k$, there are non-negative integers  $\alpha_{k,\ell}^{(0)},\,\alpha_{k,\ell}^{(1)},\,\dots,\,\alpha_{k,\ell}^{(\ell)}$\,,
satisfying the conditions:\\

(i) $\alpha_{k,\ell}^{(0)}+\alpha_{k,\ell}^{(1)}+\dots +\alpha_{j,\ell}^{(\ell)}=\l(\begin{smallmatrix} k\\ \ell \end{smallmatrix}\r)$ for all $k,\ell$\,,\\

(ii) $2\ell -k -r\leq 0$ whenever $\alpha_{k,\ell}^{(r)}\neq 0$,\\

\noindent
such that one has the identity (between two operators on the space of $N$-invariant distributions ${\mathfrak S}$)
\begin{equation}\label{414}
\widetilde{T}^k=\sum_{\ell=0}^k H^{k-2\ell}\,\l(\alpha_{k,\ell}^{(0)}\,I+\alpha_{k,\ell}^{(1)}\,\sigma_1+\dots +\alpha_{k,\ell}^{(\ell)}\,\sigma_{\ell}\r).
\end{equation}\\
\end{theorem}

\begin{proof}
By induction. Assuming that the given formula holds, we write $\widetilde{T}^{k+1}=\widetilde{T}^k(H+H^{-1}\sigma_1)$, using the equations $\sigma_r\,H\sim H\,\sigma_{r-1}$ ($r\geq 1$) and $\sigma_r\,H^{-1}\sigma_1\sim H^{-1}\sigma_{r+1}$. We obtain
\begin{align}\label{417}
\widetilde{T}^{k+1}&=\sum_{\ell=0}^k H^{k+1-2\ell}\l(\alpha_{k,\ell}^{(0)}\,I+\alpha_{k,\ell}^{(1)}\,I
+\alpha_{k,\ell}^{(2)}\,\sigma_1+\dots +\alpha_{k,\ell}^{(\ell)}\,\sigma_{\ell-1}\r)\nonumber\\
&+\sum_{\ell=0}^k H^{k-1-2\ell}\l(\alpha_{k,\ell}^{(0)}\,\sigma_1+\alpha_{k,\ell}^{(1)}\,\sigma_2+\dots +\alpha_{k,\ell}^{(\ell)}\,\sigma_{\ell+1}\r),
\end{align}
or
\begin{align}
\widetilde{T}^{k+1}&=\sum_{\ell=0}^k H^{k+1-2\ell}\l(\alpha_{k,\ell}^{(0)}\,I+\alpha_{k,\ell}^{(1)}\,I
+\alpha_{k,\ell}^{(2)}\,\sigma_1+\dots +\alpha_{k,\ell}^{(\ell)}\,\sigma_{\ell-1}\r)\nonumber\\
&+\sum_{\ell=1}^{k+1} H^{k+1-2\ell}\l(\alpha_{k,\ell-1}^{(0)}\,\sigma_1+\alpha_{k,\ell-1}^{(1)}\,\sigma_2+\dots +\alpha_{k,\ell-1}^{(\ell-1)}\,\sigma_{\ell}\r).
\end{align}
The point (i) follows, using $\l(\begin{smallmatrix} k \\ \ell \end{smallmatrix}\r)+\l(\begin{smallmatrix} k \\ \ell-1 \end{smallmatrix}\r)=\l(\begin{smallmatrix} k+1 \\ \ell \end{smallmatrix}\r)$. Next, one observes that, in the expansion of $\widetilde{T}^{k+1}$,
the term $H^{k+1-2\ell}_r$ is the sum of two terms originating (in the process of obtaining $\widetilde{T}^{k+1}$ from $\widetilde{T}^k$) from the terms $H^{k-2\ell}\sigma_{r+1}$ and $H^{k-2\ell+2}\sigma_{r-1}$. The condition $2\ell-(k+1)-r\leq 0$ is certainly true if either $2\ell-k-(r+1)\leq 0$ or $(2\ell-2)-k-(r-1)\leq 0$, which proves the point (ii) by induction.\\
\end{proof}

\section{The main estimate}

We need to analyze the effect of powers of the Hecke operator, or of the individual terms $H^{k-2\ell}\sigma_r$ of the sum (\ref{414}), on ${\mathfrak T}_M=\sum_{(a,c)=1} I_{a,c}$.
This will be based on the equations (\ref{22}), which yield immediately, using
\begin{equation}
<\Omega\l(\sm{1}{0}{c}{1}\r){\mathfrak S},\,v\,\otimes\,\,u>=\int_{\R^2} v(x_1)\,\l(\Omega\l(\sm{1}{0}{c}{1}\r)\,{\mathfrak S}\r)(x_1,x_2)\,u(x_2)\,dx_1\,dx_2,
\end{equation}
the equations
\begin{align}\label{52}
<\Omega\l(\sm{1}{0}{c}{1}\r){\mathfrak S},\,v\otimes\,u>&=<{\mathfrak S},\,(e^{i\pi cx_1^2}\,v)\,\otimes\,(e^{i\pi cx_2^2}\,u)>,\nonumber\\
<\Omega\l(\sm{q^{-1}}{0}{0}{q}\r){\mathfrak S},\,v\,\otimes\,u>&=<{\mathfrak S},\,(qv_q)\,\otimes\,(qu_q)>,\nonumber\\
<\Omega\l(\sm{0}{1}{-1}{0}\r){\mathfrak S},\,\,v\,\otimes\,u>&=-i\,
<{\mathfrak S},\,\widehat{v}\,\otimes\,\widehat{u}>,
\end{align}
with $v_q(x_1)=v(qx_1)$.\\

This set of identities is fully similar (though even easier to establish) to the set of equations which, in pseudodifferential analysis, relate the metaplectic representation to the geometric action of $SL(2,\R)$ in ${\mathcal S}'(\R^2)$.

\begin{theorem}\label{theo51}
Given a prime $p$, and $q=p^n$ with $n\in \Z$, the distribution $q^{-1-2i\pi\E}\,\sigma_r\,{\mathfrak T}_M$ remains in a weakly bounded subset, independent of $n$ and of $r=0,1,\dots$, of the space of continuous linear forms on $\Rr^4{\mathcal S}(\R^2)$.\\
\end{theorem}

\begin{proof}
We rewrite the first two identities as the handier
\begin{align}\label{54}
<q^{-1-2i\pi \E}{\mathfrak S},\,v\,\otimes\,u>&=\int_{\R^2}v(x_1)\,q^{-2}{\mathfrak S}(\frac{x_1}{q},\frac{x_2}{q})\,u(x_2)\,dx_2\nonumber\\
&=<{\mathfrak S},\,v_q\,\otimes\,u_q)>
\end{align}
and
\begin{equation}\label{55}
<\tau[\beta]\,{\mathfrak S},\,v\otimes\,u>=
<{\mathfrak S},\,(e^{i\pi \beta x_1^2}v)\,\otimes\,(e^{i\pi \beta x_2^2}u)>.
\end{equation}\\

Replacing ${\mathfrak T}_M$ by $\Rr^4{\mathfrak T}_M$ has been shown in Theorem \ref{theo31} to take care of the $(a,c)$-summation. We address now the question of dependence on $q$ of the distribution $q^{-1-2i\pi\E}\,\sigma_r\,\Rr^4{\mathfrak T}_M$. To tackle the $q$-dependence, we may specialize in the individual terms $\overset{\circ}{I}_{a,c}$ of the series (\ref{219}), but not before we have used Lemma \ref{lem42} to change the order of the two factors $q^{-1-2i\pi\E}$ and $\sigma_r$. The operator $q^{-1-2i\pi\E}=p^{-n(1+2i\pi\E)}$ is the one denoted as $H^{2n}$ in Lemma \ref{lem42}, so that
\begin{equation}
q^{-1-2i\pi\E}\,\sigma_r=\sigma'_r\,q^{-1-2i\pi\E},
\end{equation}
where $\sigma'_r$ is the self-transpose operator defined as
\begin{equation}
\sigma'_r=\begin{cases}\sigma_{r+2n}\quad &{\mathrm{if}}\,\,n\geq 0,\,\, {\mathrm{or}} \,\,q\geq 1,\\
\sigma_r^{(-2n)}\quad &{\mathrm{if}}\,\,n\leq 0,\,\, {\mathrm{or}}\,\, q\leq 1.
\end{cases}
\end{equation}
Just remember from Lemma \ref{lem42} that $\sigma'_r$ is an arithmetic average of operators $\tau[\beta]$: $\beta$ can be chosen to lie in $[0,1[$ if $q\geq 1$, only in $[0,p^{-2n}[=
[0,q^{-2}[$ if $q<1$.\\

When this has been done, we can consider in place of $q^{-1-2i\pi\E}\,\sigma_r\,\Rr^4{\mathfrak T}_M$ the terms $\sigma'_r\,q^{-1-2i\pi\E}\,\Rr^4\overset{\circ}{I}_{a,c}$ of the series expansion
of its version modified in the way just indicated. Note that, considering the individual terms of the two series, this is not the same as $q^{-1-2i\pi\E}\,\sigma_r\,\Rr^4\overset{\circ}{I}_{a,c}$: applying the commutation formulas in Lemma \ref{lem42} demands that one should be dealing with a distribution invariant under the multiplication by $e^{i\pi\,|x|^2}$. As a final simplification, we may drop for awhile the rotation operator $\Rr^4$ which commutes with all operators present in the algebraic calculations, just remembering when needed that $u$ is divisible by $x_2^4$ in ${\mathcal S}(\R)$. We are left with the following problem: show that, given $v,u\in {\mathcal S}(\R)$, and $q=p^n$ with $n\in \Z$,
\begin{equation}\label{57}
<\tau[\beta]\,q^{-1-2i\pi\E}\,\overset{\circ}{I}_{a,c},\,\,v\,\otimes\,u>=
<\overset{\circ}{I}_{a,c},\,\l(e^{i\pi\beta x_1^2}\,v\r)_q\,\otimes\,\l(e^{i\pi\beta x_2^2}\,u\r)_q>
\end{equation}
is bounded uniformly with the respect to $\beta\geq 0$ in the following two cases: when $q\geq 1$ and $\beta\leq 1$, and when $q\leq 1$ and $\beta\leq q^{-2}$. In view of (\ref{38}), we may assume that the function $u$ is divisible by $x_2^4$.\\

The analysis of this problem will also depend on whether $|a|\geq |c|$ or $|a|<|c|$. We must not lose the $(a,c)$-summability, so we must use the first equation (\ref{312}) when $|a|\geq |c|$, the second when $|a|< |c|$. The equation (\ref{313}) will suffice in three cases (out of four), but it will be necessary to go back to (\ref{312}) in the case when $|a|>|c|$ and $q<1$.\\

When $q\geq 1$, one has $\beta\leq 1$ and a function such as $e^{i\pi\beta x_2^2}\,u$ remains, as $\beta$ varies, in a bounded subset of ${\mathcal S}(\R)$: besides, $\Vert w_q\Vert_{\overset{}{L^1}}=q^{-1}\, \Vert w\Vert_{\overset{}{L^1}}$, so we are done in the case when $q\geq 1$ and $|a|\geq |c|$. The same goes if $|a|\leq |c|$, using this time the second equation (\ref{313}) and the fact that $\Vert {\mathcal F}^{-1}w_q\Vert_{\overset{}{L^1}}=
\Vert w\Vert_{\overset{}{L^1}}$. In this last case, the condition $q\geq 1$ is not necessary.\\

The last case is the one for which $q<1$ and $|a|> |c|$: recall that we are now dealing with an arithmetic average of operators $\tau[\beta]$ with $\beta\leq q^{-2}$. We write, using (\ref{57}) and (\ref{312}),
\begin{multline}
<\tau[\beta]\,q^{-1-2i\pi\E}\overset{\circ}{I}_{a,c},\,\,v\,\otimes\,u>=
<\overset{\circ}{I}_{a,c},\,\,\l(e^{i\pi\beta x_1^2}\,v\r)_q\,\otimes\,\l(e^{i\pi\beta x_2^2}\,u\r)_q>\\
=i\,e^{\frac{2i\pi M}{ac}}\,<\overset{\circ}{I}_{-c,a},\,\,\l({\mathcal F}^{-1}(e^{i\pi\beta x_1^2}\,v)_q\r),\otimes\,\l({\mathcal F}^{-1}(e^{i\pi\beta x_2^2}\,u)_q\r)>.
\end{multline}
We use now (\ref{34}), which leads us to the question of bounding independently of $\beta\leq q^{-2}$ the expression
\begin{multline}\label{59}
B_{-c,a}\l({\mathcal F}^{-1}(e^{i\pi\beta x_2^2}\,u)_q\r)\\
=\intR \exp\l(-\frac{2i\pi a x_2^2}{c}\r) \l({\mathcal F}^{-1}(e^{i\pi\beta x_2^2}\,u)_q\r)(x_2)\,dx_2\\
=\intR \exp\l(-\frac{2i\pi a x_2^2}{c}\r)dx_2\,\intR e^{i\pi\beta q^2t^2}\,u(qt)\,e^{2i\pi t x_2}\,dt\\
=\frac{1}{q}\,\intR \exp\l(-\frac{2i\pi a x_2^2}{c}\r)dx_2\,\intR e^{i\pi\beta t^2}\,u(t)\,e^{\frac{2i\pi t x_2}{q}}\,dt\\
=\frac{1}{q}\,(\frac{a}{c})^{-\hf}\,\intR e^{i\pi \beta t^2}\,\exp\l(\frac{2i\pi ct^2}{aq^2}\r) u(t)\,dt.
\end{multline}

Recall that using the operator $\Rr^4$, so as to ensure the $(a,c)$-summability for $|c|\geq |a|$,
we benefit at the same time (\ref{38}) from the possibility to assume that $u$ is divisible by $x_2^4$:
writing $u(x_2)=x_2\,u_1(x_2)$ will suffice for our purpose. Then, with the help of the integration by parts associated to the equation
\begin{equation}
\exp\l(i\pi (\beta+\frac{2c}{aq^2})\,t^2\r)=
(2i\pi t)^{-1}\,(\beta+\frac{2c}{aq^2})^{-1}\,\frac{d}{dt}\,\l[\exp\l(i\pi (\beta+\frac{2c}{aq^2})\,t^2\r)\r],
\end{equation}
we save an extra factor $(\beta+\frac{2c}{aq^2})^{-1}\leq \frac{aq^2}{c}$, the last inequality originating from the inequality $\beta<q^{-2}$. This factor just covers the (bad) factor $\frac{1}{q}$ on the left of the right-hand side of (\ref{59}), together with the similar one associated to the $A_{a,c}$-term.\\

\end{proof}

\section{The Ramanujan-Deligne theorem}\label{sec6}


Though the series ${\mathfrak T}_M=\sum_{g\in \Gamma/N^{\bullet}} \Omega(g)\,\psi_M
=\sum_{(a,c)=1}I_{a,c}$ considered in (\ref{219}) does not converge in ${\mathcal S}'(\R^2)$, the series of its $\Theta_m$-transforms does converge for every $m$ in the space of holomorphic functions in $\H$, and its sum is given, with $\generic \in \Gamma$, as
\begin{equation}\label{61}
\l(\Theta_m{\mathfrak T}_M\r)(z)=-i\,(2M)^{\frac{m}{2}}\sum_{(a,c)=1}
(-dz+b)^{-m-1}\exp\l(2i\pi M\frac{-cz+a}{-dz+b}\r),
\end{equation}
a standard Poincar\'e series. This expansion is a consequence of the equation
\begin{equation}
\l(\Theta_m\psi_M\r)(z)=-i(2M)^{\frac{m}{2}} e^{2i\pi Mz}
\end{equation}
and of Proposition \ref{prop21}.\\

\begin{theorem}\label{theo61}
(Deligne) The coefficients $b_p$ of the Fourier series decomposition of a holomorphic cusp-form $f$ of even weight $2k=m+1$ and Hecke type, normalized by the condition $b_1=1$, satisfy for $p$ prime the estimate $|b_p|\leq 2\,p^{k-\hf}$.\\
\end{theorem}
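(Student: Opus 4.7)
The strategy is to apply a large even power $T_p^{2N}$ of the Hecke operator to the identity $T_p f=b_p f$, transfer it to the plane via $\theta_m$, estimate the resulting distributional pairing by means of Propositions \ref{prop33} and \ref{prop41}, and extract the bound by taking $2N$-th roots as $N\to\infty$.

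First, write $f=\sum_{M=1}^d c_M\,\theta_m{\mathfrak T}_M$ as a finite linear combination of the Poincar\'e series (\ref{51}), which span the finite-dimensional space of cusp forms of weight $m+1$. The formal series ${\mathfrak T}_M$ is not tempered, but the regularized distribution ${\mathfrak S}_M^{(j)}\in{\mathcal S}'(\R^2)$ of (\ref{41}) is well-defined for any fixed $j\geq 3$. The integrand of $\theta_m$ has $2i\pi\A$-eigenvalue $m$ (the factor $(x_1+ix_2)^m$ is an $\A$-eigenvector, while $e^{i\pi z|x|^2}$ is rotation-invariant), so a transposition of $P_j(-2i\pi\A)$ onto this integrand yields $\theta_m{\mathfrak S}_M^{(j)}=P_j(m)\cdot\theta_m{\mathfrak T}_M$. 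Since $m\geq 11$ makes $P_j(m)\neq 0$, the distribution ${\mathfrak F}_M^{(j)}:=P_j(m)^{-1}{\mathfrak S}_M^{(j)}$ is a genuine $\Ana(\Gamma_\infty^\bullet)$-invariant tempered distribution with $\theta_m{\mathfrak F}_M^{(j)}=\theta_m{\mathfrak T}_M$.

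Second, pick $z_0\in\H$ with $f(z_0)\neq 0$ and set $h_{z_0}(x):=(x_1+ix_2)^m\,e^{i\pi z_0|x|^2}$, which belongs to ${\mathcal S}(\R^2)$ because $\Im z_0>0$ renders the exponential Gaussian. By (\ref{22}), $(\theta_m{\mathfrak U})(z_0)=\Lan{\mathfrak U},\,h_{z_0}\Ran$ for every tempered ${\mathfrak U}$. Iterating Lemma \ref{lem31} gives $T_p^{2N}\theta_m=\theta_m\l(T_p^{\mathrm{plane}}\r)^{2N}$, hence
$$b_p^{2N}\,f(z_0)\;=\;\sum_{M=1}^d c_M\,\Lan \l(T_p^{\mathrm{plane}}\r)^{2N}\,{\mathfrak F}_M^{(j)},\,h_{z_0}\Ran.$$
Now write $\l(T_p^{\mathrm{plane}}\r)^{2N}=p^{Nm}\,\widetilde T^{2N}$ and expand $\widetilde T^{2N}$ by Proposition \ref{prop33} with $k=2N$. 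Each surviving summand is $R^{2N-2\ell}\sigma_r=q^{1+2i\pi\A^\natural}\sigma_r$ with $q=p^{\ell-N}$, and condition (ii) there reads $p^r\geq q^2$, which is precisely the hypothesis of Proposition \ref{prop41}. That proposition bounds each such pairing against $h_{z_0}$ by $C_\eps\l(q+q^{-1}\r)^\eps=O\l(p^{|N-\ell|\eps}\r)$, uniformly in $r$. Summing with $\sum_r\alpha_{2N,\ell}^{(r)}=\binom{2N}{\ell}$ and using
$$\sum_{\ell=0}^{2N}\binom{2N}{\ell}\,p^{|N-\ell|\eps}\;\leq\;p^{N\eps}\,2^{2N},$$
one gets $\l|\Lan\l(T_p^{\mathrm{plane}}\r)^{2N}{\mathfrak F}_M^{(j)},h_{z_0}\Ran\r|\leq C'_\eps\,2^{2N}\,p^{N(m+\eps)}$. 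Combining with the finite sum over $M$, one finds $|b_p|^{2N}\,|f(z_0)|\leq C''_\eps\,2^{2N}\,p^{N(m+\eps)}$; taking $2N$-th roots, letting $N\to\infty$, and then $\eps\downarrow 0$ gives $|b_p|\leq 2\,p^{m/2}=2\,p^{k-\hf}$.

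The principal obstacle is the uniformity in $N$: one must verify that the continuous seminorm on ${\mathcal S}(\R^2)$ implicit in Proposition \ref{prop41} is evaluated on the single, $N$-independent test function $h_{z_0}$ in a way that does not reintroduce $N$-dependence, so that the binomial combinatorics genuinely produces the clean factor $2^{2N}$ (and not something like $N^A\,2^{2N}$) responsible for the sharp constant $2$ in Deligne's inequality. The algebraic simplicity of Proposition \ref{prop33}, in which $R$ acts by a scaling and each $\sigma_r$ by a bounded phase average, makes this tractable: a single $j$-dependent, $N$-independent constant suffices across all powers of $\widetilde T$, and the regularization parameter $j\geq 3$ can be fixed once and for all at the outset of the proof.
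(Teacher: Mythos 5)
Your proposal is correct and follows essentially the same route as the paper: decompose $f$ into the Poincar\'e series $\theta_m{\mathfrak T}_M$, regularize with $P_j(\pm 2i\pi\A)$ (the nonvanishing of $P_j$ at $\pm m$ for $j=3$, $m\geq 11$ being all that matters), transfer $T_p^{2N}$ to $\l(T_p^{\mathrm{plane}}\r)^{2N}$ via Lemma \ref{lem31}, expand by Proposition \ref{prop33}, bound each term $q^{1+2i\pi\A^{\natural}}\sigma_r$ by Proposition \ref{prop41} using that condition (ii) is exactly $p^r\geq q^2$, sum the binomial coefficients to $2^{2N}$, and take $2N$-th roots. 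Your only departures are presentational and welcome ones --- pairing against the fixed test function $h_{z_0}$ to extract the scalar $b_p^{2N}f(z_0)$ makes explicit the limit argument the paper leaves implicit, and your remark on the $N$-independence of the seminorm correctly identifies why the constant $2$ survives.
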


\begin{proof}
Let $M_0$ be the dimension of the linear space of cusp-forms of weight $2k$: then \cite[p.54]{iwatop}, $f$ is of necessity a linear combination of the Poincar\'e series
$\Theta_m{\mathfrak T}_M$ (cf.\,(\ref{61})) with $M\leq M_0$. For each such $M$, and large $N$, we consider the image of ${\mathfrak T}_M$ under $\l(p^{-\frac{m}{2}}\,T_p^{\mathrm{plane}}\r)^{2N}$, i.e., using Theorem \ref{theo43}, under the operator
\begin{equation}
\l(p^{-\hf-i\pi\E}+p^{\hf+i\pi\E}\,\sigma_1\r)^{2N}=\sum_{\ell=0}^{2N} \sum_{0\leq r\leq \ell}\alpha_{2N,\ell}^{(r)}\,\,\,p^{(\ell-N)(1+2i\pi \E)}\,\sigma_r\,.
\end{equation}\\

Fix $p$, set ${\mathfrak T}_M^{[4]}=\Rr^4\,{\mathfrak T}_M$ and recall that, with $q=p^{\ell-N}$ and $p^{(\ell-N)(1+2i\pi \E)}=q^{1+2i\pi \E}$, Theorem \ref{theo51} gives for the expression $\Lan q^{1+2i\pi\E}\,\sigma_r\,{\mathfrak T}_M^{[4]},\,h\Ran$ a bound $\Zert\,h\,\Zert$, for some continuous norm $\Zert\,\,\Zert$ on ${\mathcal S}(\R^2)$ independent of $q$ and $r$, which can also be chosen independent of $M\leq M_0$. It follows then from (\ref{413}) and (\ref{414}) that
\begin{equation}
\l|\Lan \l(p^{-\hf-i\pi\E}+p^{\hf+i\pi\E}\,\sigma_1\r)^{2N}{\mathfrak T}_M^{[4]},\,h\Ran\r|\leq \Zert h \Zert\,\times\,\sum_{\ell=0}^{2N}\sum_{0\leq r\leq \ell} \alpha^{(r)}_{2N,\ell}\,.
\end{equation}
Using Theorem \ref{theo43} again, one has
\begin{equation}\label{65}
\sum_{\ell=0}^{2N}\sum_{0\leq r\leq \ell} \alpha^{(r)}_{2N,\ell}=
\sum_{\ell=0}^{2N}\begin{pmatrix} 2N \\ \ell\end{pmatrix}=2^{2N}.
\end{equation}
The product $2^{-2N}\,\l(p^{-\frac{m}{2}}T_p^{\mathrm{plane}}\r)^{2N}\,{\mathfrak T}_M^{[4]}$ thus remains for every $M$, as $N\to \infty$, in a bounded subset of the space of tempered distributions.\\

Let $f$ be a Hecke eigenform of weight $2k$. According to what has been recalled in the beginning of this proof, $f$ coincides with a (finite) linear combination $\sum_{M\leq M_0}\,\beta_M\,\Theta_m\,{\mathfrak T}_M$, and
\begin{equation}
T_p^{2N}f=\sum_M \beta_M\,T_p^{2N}\,\Theta_m\,{\mathfrak T}_M.
\end{equation}
On the other hand, $(2i\pi\Rr)(x_1+ix_2)^{-m}=im\,(x_1+ix_2)^{-m}$ and
$\Theta_m\,(\Rr^4\,{\mathfrak T}_M)=(\frac{m}{2\pi})^4\Theta_m\,{\mathfrak T}_M$.\\

If $f$ admits the expansion $f(z)=\sum_{n\geq 1} b_n\,e^{2i\pi nz}$ with $b_1=1$, one thus has, using the fundamental property of the Hecke operator recalled immediately after (\ref{41}), to wit $T_pf=b_pf$, and not forgetting the factor $p^{\frac{m}{2}}=p^{k-\hf}$ in (\ref{413}),

\begin{align}
2^{-2N}(\frac{m}{2\pi})^4\,\l(p^{-\frac{m}{2}}\,b_p\r)^{2N}f&=2^{-2N}\sum_M \beta_M\,\l(p^{-\frac{m}{2}}T_p\r)^{2N}\,\Theta_m\,{\mathfrak T}_M^{[4]}\nonumber\\
&=\sum_M \beta_M\,\Theta_m\l[2^{-2N}\l(p^{-\frac{m}{2}}T_p^{\mathrm{dist}}\r)^{2N}\,{\mathfrak T}_M^{[4]}\r].
\end{align}
From the estimate that follows immediately (\ref{65}), this is the image under $\Theta_m$ of a distribution which remains in a bounded subset of ${\mathcal S}'(\R^2)$. The coefficient
$2^{-2N}(\frac{m}{2\pi})^4\,\l(p^{-\frac{m}{2}}\,b_p\r)^{2N}$ on the left-hand side thus remains bounded as $N\to \infty$. Taking a $(2N)$th root, we have obtained the desired estimate $|b_p|\leq 2p^{\frac{m}{2}}$.\\

\end{proof}

\section{The Ramanujan conjecture for Maass forms: a brief comparison}

The proof of the Ramanujan-Petersson conjecture in the Maass case \cite{untRam} and the present proof of the Ramanujan-Deligne theorem \cite{del} follow totally similar ways: everything is similar, and everything is different. In both cases, developing the analysis in the plane, rather than the half-plane, is essential. But it is in the present paper the plane given with its Euclidean structure that must be used, while in the case of Maass forms it is the symplectic plane. The pair of operators $(\Rr,-\E)$ present here must be traded in the Maass case for the pair $(\E,\E^{\natural})$, with $2i\pi\E^{\natural}=x_1\partial_1-x_2\partial_2$. The algebraic part (the present Section 4, culminating in Proposition 4.3), is based in the Maass case on a set of commutation relations identical to those in Lemma \ref{lem42}, involving however a set of operators distinct from the set $\{H,\,\tau[\beta]\}$. The algebraic calculations are then identical.\\

In the Maass case, it is the symbols in the sense of the Weyl operator-calculus that play the role ascribed here to integral kernels: then, the bilinear object $v\,\otimes\,u$ becomes the so-called Wigner function, the hermitian object defined as
\begin{equation}
\Wig(v,\,u)(x,\xi)=\intR \overline{v}(x+t)\,u(x-t)\,e^{2i\pi t\xi}\,dt.
\end{equation}
At the end, the analysis in the Maass case requires some more developments: while the eigenvalues of the rotation operator are integers, Maass forms exist for ``mysterious'' eigenvalues of the operator $\E$. Also, in the Maass case, cusp-forms are no longer linear combinations of Poincar\'e series: instead, some spectral-theoretic developments are needed.\\

We renounced writing an axiomatic version of the Ramanujan conjecture that would cover both cases, for several reasons, one of which is that we have already submitted our proof of the Ramanujan-Petersson conjecture elsewhere. It might start with some common terminology: may we suggest the word "Schwartz symbolic operator-calculus" for the representation of operators by their integral kernels ? we might then write a mental-two column exposition. The covariance property demands in each case using a pair of representations, one on the space of symbols and the other (essentially the metaplectic representation in both cases) on the space of functions on the line the operators are tested on.
This is not completely classical for the following two reasons: first, it goes beyond geometric quantization theory (the representation $\Omega$ is not purely geometric). Next, it is essential to enrich the representations under consideration by taking their semi-direct products with the Heisenberg representation: the crucial identity (\ref{32}) and its analogue in the Weyl calculus depend on this extension. All this will subsist if one replaces the group $SL(2,\R)$ by the symplectic group ${\mathrm{Sp}}(2n,\R)$. All elements of the necessary construction show themselves naturally, just replacing the one-dimensional Weyl calculus by the $n$-dimensional one, and the upper half-plane by
the complex tube over the cone of positive-definite symmetric matrices: but we have not checked details. The basic arithmetic aspects would be covered in the book \cite{maass}. \\


\begin{thebibliography}{99}
\bibitem{cha}
K.Chandrasekharan, {\em Elliptic Functions\/}, Springer-Verlag, Berlin, 1985.
\bibitem{del}
P.Deligne, {\em La conjecture de Weil.I.\/}, Inst.Hautes Etudes Sci.Publ.Math. {\bf 43} (1974), 273-307.
\bibitem{iwatop}
H.Iwaniec, {\em Topics in classical automorphic forms}, Grad.Studies in Math. {\bf 17} (1997), Amer.Math.Soc., Providence.
\bibitem{maass}
H.Maass, {\em Siegel's modular forms and Dirichlet series}, Lecture Notes in Math. {\bf 216} (1971),
Springer-Verlag.
\bibitem{mos}
W.Magnus,\,F.Oberhettinger,\,R.P.Soni, {\em Formulas and theorems for the
special functions of mathematical physics}, $3^{{\mathrm{rd}}}$ edition,
Springer--Verlag, Berlin, 1966.
\bibitem{man}
Y.I.Manin, {\em Mathematics and Physics}, Progress in Physics. {\bf 3} (1981), Birkh\"auser, Basel.
\bibitem{sel}
A.Selberg, {\em On the estimation of Fourier coefficients of modular forms},
Proc.Symp.Pure Math.\,{\bf 8} (1963), 1-25.
\bibitem{ste}
E.M.Stein,\,G.Weiss, {\em Introduction to Fourier analysis in Euclidean spaces\/}, Princeton Univ.Press, Princeton, 1971.
\bibitem{untRam}
A.Unterberger, {\em The Ramanujan-Petersson conjecture for Maass forms}, arXiv:2001.10956 [math.NT], submitted.
\end{thebibliography}
\end{document}